\newtheorem{thm}{Theorem}
\newtheorem{prop}[thm]{Lemma}
\newtheorem{definition}[thm]{Definition}
\def\Fscr{\mathcal{F}}
\title{A Semi-strong Perfect Digraph Theorem}
\author{Stephan Dominique Andres \qquad Helena Bergold\\
Winfried Hochst\"{a}ttler \qquad \qquad  Johanna Wiehe\\
\normalsize FernUniversit\"{a}t in Hagen, Fakult\"{a}t f\"{u}r 
Mathematik und Informatik\\
\normalsize 58084 Hagen, Germany\\
\normalsize \texttt{\{dominique.andres,helena.bergold,winfried.hochstaettler,}\\\normalsize\texttt{johanna.wiehe\}@fernuni-hagen.de}}
\begin{document}\maketitle

\begin{abstract}
  Reed showed that, if two graphs are $P_4$-isomorphic, then
  either both are perfect or none of them is. In this note we will
  derive an analogous result for perfect digraphs.

{\bf Key words:} dichromatic number, perfect graph, perfect digraph

{\bf MSC 2000:} 05C17, 05C20, 05C15
\end{abstract}

\section{Introduction and Notation}
Perfect digraphs have been introduced by Andres and Hochst\"attler
\cite{perfectdigraphs} as the class of digraphs where the clique
number equals the dichromatic number for every induced subdigraph.
Reed \cite{semistrong} showed that, if two graphs are
$P_4$-isomorphic, then either both are perfect or none of them is. In
this note we will derive an analogous result for perfect digraphs.

We start with some definitions.  For basic terminology we refer to
Bang-Jensen and Gutin~\cite{digraphs}.  For the rest of the paper, we
only consider digraphs without loops.  Let $D=(V,A)$ be a digraph. The
\emph{symmetric part} $S(D)$ of $D=(V,A)$ is the digraph $(V,A_2)$
where $A_2$ is the union of all pairs of antiparallel arcs of $D$, the
\emph{oriented part} $O(D)$ of $D$ is the digraph $(V,A_1)$ where
$A_1=A\setminus A_2$. 

A proper $k$-coloring of $D$ is an assignment $c:V\to \{1,\ldots,k\}$
such that for all $1\le i \le k$ the digraph induced by
$c^{-1}(\{i\})$ is acyclic.  The \emph{dichromatic number} $\chi(D)$
of $D$ is the smallest nonnegative integer $k$ such that $D$ admits a proper $k$-coloring.
A \emph{clique} in a digraph $D$ is a subdigraph in which for any two
distinct vertices $v$ and $w$ both arcs $(v,w)$ and $(w,v)$ exist.
The \emph{clique number} $\omega(D)$ of $D$ is the size of the largest
clique in $S(D)$.  The clique number is an obvious lower bound for the
dichromatic number.  $D$ is called \emph{perfect} if, for any induced
subdigraph $H$ of~$D$, $\chi(H)=\omega(H)$.

An (undirected) graph $G=(V,E)$ can be considered as the symmetric
digraph $D_G=(V,A)$ with $A=\{(v,w),(w,v)\mid vw\in E\}$. In the
following, we will not distinguish between $G$ and $D_G$. In this way,
the dichromatic number of a graph $G$ is its chromatic number
$\chi(G)$, the clique number of $G$ is its usual clique number
$\omega(G)$, and $G$ is perfect as a digraph if and only if $G$ is
perfect as a graph.   

A main result of \cite{perfectdigraphs} is the following:

\begin{thm}[\cite{perfectdigraphs}]\label{main}
A digraph $D=(V,A)$ is perfect if and only if $S(D)$ is perfect and $D$ 
does not 
contain any directed cycle $\vec{C}_n$ with $n\ge3$ as induced subdigraph.
\end{thm}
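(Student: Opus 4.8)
The plan is to prove both implications, noting first that both defining properties—perfection of $S(D)$ and the absence of an induced directed cycle—are hereditary under taking induced subdigraphs. Indeed $S(D)[W]=S(D[W])$ for every vertex set $W$, so the induced subdigraphs of $S(D)$ are exactly the symmetric parts of the induced subdigraphs of $D$; and an induced $\vec{C}_n$ inside $D[W]$ is induced in $D$. Hence it suffices to reason about $\chi(D)$ and $\omega(D)$ themselves and afterwards quantify over all induced subdigraphs.

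For necessity, suppose $D$ is perfect. To see that $S(D)$ is perfect I would fix $W$ and observe that $S(D)[W]$ arises from $D[W]$ by deleting the oriented arcs; since deleting arcs cannot increase the dichromatic number, $\chi(S(D)[W])\le\chi(D[W])$. Because the clique number depends only on the symmetric part, $\omega(D[W])=\omega(S(D)[W])$, and the clique number is always a lower bound for the dichromatic number. Chaining these gives $\omega(S(D)[W])\le\chi(S(D)[W])\le\chi(D[W])=\omega(D[W])=\omega(S(D)[W])$, forcing equality throughout, so $S(D)$ is perfect. For the second condition I would simply note that $\vec{C}_n$ with $n\ge3$ has empty symmetric part, hence $\omega(\vec{C}_n)=1$, while $\chi(\vec{C}_n)=2$; thus $\vec{C}_n$ is not perfect and cannot occur as an induced subdigraph of the perfect digraph $D$.

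For sufficiency, assume $S(D)$ is perfect and $D$ has no induced $\vec{C}_n$ with $n\ge3$. By heredity it is enough to prove $\chi(D)=\omega(D)$. Since the clique number ignores oriented arcs, $\omega(D)=\omega(S(D))$, and perfection of $S(D)$ yields a proper coloring of the graph $S(D)$ with exactly $\omega(D)$ colors, i.e.\ a partition of $V$ into $\omega(D)$ sets each independent in $S(D)$. I claim this is already a proper coloring of $D$. Indeed, a color class $C$ is independent in $S(D)$, so $D[C]$ contains no pair of antiparallel arcs and consists solely of oriented arcs; it then remains only to check that $D[C]$ is acyclic. This would give $\chi(D)\le\omega(D)$, and combined with the trivial bound $\chi(D)\ge\omega(D)$ completes the proof.

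The crux—and the step I expect to be the main obstacle—is verifying that each such color class $C$ induces an acyclic subdigraph. I would argue by contradiction: if $D[C]$ contained a directed cycle, choose a shortest one, say $v_1\to v_2\to\cdots\to v_k\to v_1$. Since $C$ is independent in $S(D)$ there are no $2$-cycles, so $k\ge3$. The heart of the matter is to show this shortest cycle is in fact induced in $D$, i.e.\ chordless: any extra arc between two of its vertices would, using again that antiparallel pairs are forbidden inside $C$, produce a strictly shorter directed cycle of length at least $3$, contradicting minimality. Hence $D[\{v_1,\dots,v_k\}]$ is an induced $\vec{C}_k$, contradicting the hypothesis. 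Carefully ruling out both the \emph{forward} chords $v_i\to v_j$ and the \emph{backward} chords $v_j\to v_i$ in this minimality argument is the technical core of the proof.
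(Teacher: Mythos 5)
This theorem is not proved in the paper at all: it is quoted (Theorem~\ref{main}) from the earlier paper \cite{perfectdigraphs}, so there is no in-paper argument to compare yours against. Your proof is correct and complete, and it is essentially the standard argument for this characterization: heredity of both conditions reduces everything to $\chi(D)=\omega(D)$; necessity follows from $\chi(\vec{C}_n)=2>1=\omega(\vec{C}_n)$ and from $\omega(S(D)[W])\le\chi(S(D)[W])\le\chi(D[W])=\omega(D[W])=\omega(S(D)[W])$; sufficiency takes an optimal coloring of $S(D)$ and shows each color class is acyclic in $D$, since a shortest directed cycle in a digon-free digraph must be chordless (any chord, not being antiparallel to a cycle arc, closes a strictly shorter directed cycle) and would therefore be an induced $\vec{C}_k$, $k\ge3$. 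The step you flagged as the technical core is exactly right and your handling of it is sound, including the observation that independence of the color class in $S(D)$ rules out both digons and chords antiparallel to cycle arcs.
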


Together with the Strong Perfect Graph Theorem (see e.g.
\cite{golumbic}) this yields a characterization of perfect digraphs in
form of forbidden induced minors. The Weak Perfect Graph Theorem (see
\cite{golumbic}), though, does not generalize. The directed 4-cycle $\vec{C_4}$ is
not perfect but its complement is perfect, thus perfection is in
general not maintained under taking complements.

Two graphs $G=(V,E_1)$ and $H=(V,E_2)$ are {\em $P_4$-isomorphic}, if
any set $\{a,b,c,d\}\subseteq V$ induces a chordless path, i.e.\ a
$P_4$, in $G$ if and only if it induces a $P_4$ in $H$.

\begin{thm}[Semi-strong Perfect Graph Theorem \cite{semistrong}]\label{reedresult}
  \label{semistrong} If $G$ and $H$ are $P_4$-isomorphic, then 
\[G \text{ is perfect } \Longleftrightarrow H \text{ is perfect}.\]   
\end{thm}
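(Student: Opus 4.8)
The plan is to deduce Reed's theorem from the Strong Perfect Graph Theorem (SPGT), which tells us that a graph is perfect if and only if it contains neither an odd hole \(C_{2k+1}\) nor an odd antihole \(\overline{C_{2k+1}}\) (\(k\ge 2\)) as an induced subgraph. Two preliminary observations drive the reduction. First, \(P_4\)-isomorphism is hereditary: if \(G\) and \(H\) on the common vertex set \(V\) are \(P_4\)-isomorphic and \(S\subseteq V\), then \(G[S]\) and \(H[S]\) are again \(P_4\)-isomorphic, since a \(4\)-subset of \(S\) induces a \(P_4\) in \(G[S]\) exactly when it does in \(G\). Second, because \(P_4\) is self-complementary, a \(4\)-set induces a \(P_4\) in \(G\) iff it does in \(\overline{G}\); hence \(C_{2k+1}\) and \(\overline{C_{2k+1}}\) have identical \(P_4\)-structure. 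As the conclusion is symmetric in \(G\) and \(H\), it suffices to prove one implication, say that \(G\) imperfect forces \(H\) imperfect.

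So suppose \(G\) is imperfect. By the SPGT there is a set \(S\subseteq V\) such that \(G[S]\) is an odd hole or an odd antihole, i.e.\ a minimal imperfect graph on \(n=|S|\ge 5\) vertices. Then \(H[S]\) is \(P_4\)-isomorphic to \(G[S]\). If I can show that \emph{every} graph \(P_4\)-isomorphic to an odd hole or odd antihole is itself imperfect, then \(H[S]\) is imperfect, whence \(H\) is imperfect, and we are done by symmetry.

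The heart of the argument — and the step I expect to be the main obstacle — is therefore the following lemma: for odd \(n\ge 5\), the only graphs on a fixed \(n\)-element vertex set that are \(P_4\)-isomorphic to \(C_n\) are \(C_n\) itself and its complement \(\overline{C_n}\). Granting this, \(H[S]\in\{C_n,\overline{C_n}\}\) in every case (the hole and the antihole share the same \(P_4\)-structure), and both are imperfect, which closes the proof. To prove the lemma I would first record the \(P_4\)-structure of \(C_n\): for \(n\ge 7\) the \(P_4\)'s are exactly the \(n\) cyclic intervals \(\{i,i+1,i+2,i+3\}\) (indices mod \(n\)), while for \(n=5\) every one of the \(\binom{5}{4}\) four-subsets induces a \(P_4\). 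The case \(n=5\) is then easy: a vertex of degree \(\ge 3\) would lie in a four-set inducing a graph with a degree-\(3\) vertex, which is not a \(P_4\); so every degree is \(\le 2\), and a double count of edges against the \(P_4\)'s forces exactly \(5\) edges, i.e.\ a \(2\)-regular graph, which on \(5\) vertices is \(C_5=\overline{C_5}\).

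The remaining case \(n\ge 7\) is where the real work lies. Here I would exploit the non-edges of the \(P_4\)-structure: for instance \(\{i,i+1,i+2,i+4\}\) does \emph{not} induce a \(P_4\) in \(C_n\), and such forbidden configurations, combined with the overlaps of consecutive \(P_4\)'s on three common vertices, should pin down the adjacency pattern of any realization up to a global complementation, forcing it to be \(C_n\) or \(\overline{C_n}\). This reconstruction is exactly the phenomenon that prime graphs are determined up to complementation by their \(P_4\)-structure; odd holes and antiholes are prime for \(n\ge 5\), so no substitution or modular ambiguity arises and the bookkeeping, though tedious, goes through. The delicate point to watch is ruling out any exceptional ``separable'' realization; since we only ever apply the lemma to the specific cyclic structure of holes and antiholes, a direct case analysis suffices and avoids the general modular-decomposition machinery that made Reed's original proof long.
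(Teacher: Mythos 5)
You should first be aware that the paper does not prove this statement at all: it is Reed's theorem, quoted with a citation, and used as a black box. So the only question is whether your argument stands on its own. Your reduction is sound as far as it goes: $P_4$-isomorphism is hereditary, $P_4$ is self-complementary (so $C_n$ and $\overline{C_n}$ have the same $P_4$-structure), and, granting the Strong Perfect Graph Theorem, everything reduces to your key lemma that for odd $n\ge 5$ the only graphs with the $P_4$-structure of $C_n$ are $C_n$ and $\overline{C_n}$. Invoking the SPGT is logically legitimate (its proof does not depend on Reed's theorem), though anachronistic relative to Reed's 1987 proof, which could not and does not use it. Your $n=5$ case is complete and correct.

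The genuine gap is the case $n\ge 7$, which you yourself identify as ``where the real work lies'' and then do not do. The principle you substitute for it --- that prime graphs are determined up to complementation by their $P_4$-structure --- is not something you can lean on. As you state it, it is false: the bull (a triangle with two pendant vertices attached at distinct vertices) is prime, self-complementary, and has exactly one $4$-set inducing a $P_4$; hence it is $P_4$-isomorphic to the disjoint union of a $P_4$ and an isolated vertex, which is neither the bull nor its complement. The repaired statement, requiring \emph{both} graphs to be prime, does not apply directly either, because the unknown realization $H[S]$ is precisely what you would need to prove prime, and establishing that is essentially the same reconstruction work you are deferring. So, as written, your proposal establishes the theorem only when imperfection is witnessed by a $C_5$; a graph $P_4$-isomorphic to, say, $C_7$ is untouched. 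The lemma you want is in fact true --- it is essentially Chv\'atal's observation that the strong perfect graph conjecture implies his semi-strong conjecture --- but it needs either a genuine combinatorial reconstruction argument for all odd $n\ge 7$, exploiting both the presence of the $n$ consecutive-quadruple $P_4$'s and the absence of all others, or a precise citation. ``The bookkeeping, though tedious, goes through'' is not a proof of the one statement that carries all the content of the theorem once the SPGT has been invoked.
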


The graphs without an induced $P_4$ are the cographs \cite{corneil}. Thus any
pair of cographs with the same number of vertices is $P_4$-isomorphic.
In order to generalize 
Theorem~\ref{reedresult} to digraphs we consider the
class of directed cographs~\cite{dicographs}, which are characterized
by a set $\Fscr$ of eight forbidden induced minors. Since the class of
directed cographs is invariant under taking complements and perfect
digraphs are not, it is clear that isomorphism with respect to
$\Fscr$ will not yield the right notion of isomorphism for our
purposes. It turns out that restricting to five of these minors
yields the desired result.

\section{$P^4C$-isomorphic digraphs}
The five forbidden induced minors from \cite{dicographs} we need are
the symmetric path $P_4$, the directed
3-cycle $\vec{C_3}$, the directed path $\vec{P_3}$
and the two possible augmentations $\vec{P}_3^+$ and
$\vec{P}_3^-$ of the $\vec{P_3}$ with one antiparallel edge (see
Figure~\ref{fig:1}).

\begin{figure}[htbp]
  \centering
\includegraphics[width=.7\textwidth]{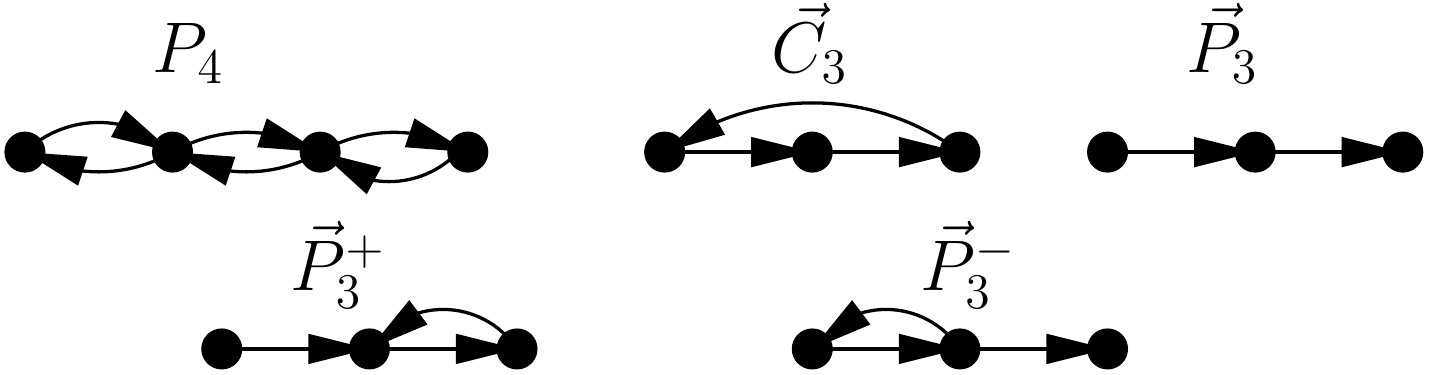}
  \caption{The five induced subdigraphs considered\label{fig:1}}
\end{figure}

\begin{definition}
  Let $D=(V,A)$ and $D'=(V,A')$ be two digraphs on the same vertex
  set. Then $D$ and $D'$ are said to be {\em $P^4C$-isomorphic} if and
  only if
  \begin{enumerate}
  \item any set $\{a,b,c,d\} \subseteq V$ induces
a~$P_4$ in $S(D)$ if and only if it induces
a~$P_4$ in~$S(D')$,
  \item any set $\{a,b,c\} \subseteq V$ induces a $\vec{C_3}$ in $D$
    if and only if it induces a $\vec{C_3}$ in~$D'$,
  \item any set $\{a,b,c\} \subseteq V$ induces a $\vec{P_3}$ with midpoint $b$ in $D$
    if and only if it induces a $\vec{P_3}$ with midpoint $b$ in $D'$ and 
  \item any set $\{a,b,c\} \subseteq V$ induces a $\vec{P}_3^+$ or a $\vec{P}_3^-$ in either case with  midpoint $b$ in $D$
    if and only if it induces one of them with midpoint $b$ in $D'$.
  \end{enumerate}
\end{definition}

Note that the 
$P_4$ in case 1 is not
necessarily induced in $D$, resp.\ in $D'$.

\begin{prop}\label{prop:cycle}
  If $D$ and $D'$ are $P^4C$-isomorphic, then $D$ contains an induced
  directed cycle of length $k\ge 3$ if and only if the same is true for $D'$.
\end{prop}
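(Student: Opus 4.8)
The plan is to prove the nontrivial direction: assuming $D$ contains an induced directed cycle on a vertex set $U=\{v_1,\dots,v_k\}$ with arcs $v_i\to v_{i+1}$ (indices mod $k$), I will exhibit an induced directed cycle of length $\ge 3$ in $D'$; the converse follows by the symmetry of $P^4C$-isomorphism. The case $k=3$ is immediate from condition~2, since $\{v_1,v_2,v_3\}$ induces $\vec{C_3}$ in $D$ and hence in $D'$. So assume $k\ge 4$.

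First I would pin down the arcs of $D'$ on $U$. For $k\ge 4$ every three consecutive vertices $v_{i-1},v_i,v_{i+1}$ induce a $\vec{P_3}$ with midpoint $v_i$ in $D$, so by condition~3 they do so in $D'$ as well. This yields three facts at once: each consecutive pair $v_i,v_{i+1}$ carries exactly one arc of $D'$; each pair at cyclic distance two carries no arc; and at every $v_i$ the two incident cycle-arcs form a path through $v_i$. Writing $\epsilon_i=+1$ if the arc between $v_i,v_{i+1}$ is $v_i\to v_{i+1}$ and $-1$ otherwise, the midpoint condition at $v_i$ forces $\epsilon_{i-1}=\epsilon_i$, so all $\epsilon_i$ agree. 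Hence $D'[U]$ contains a consistently oriented Hamiltonian directed cycle (either $v_1\to\cdots\to v_k\to v_1$ or its reverse), and in particular at least one directed cycle of length $k\ge 3$. I note that for $k\le 5$ there are no pairs at cyclic distance $\ge 3$, so $D'[U]$ is already exactly $\vec C_k$ and we are done.

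Next, for larger $k$, I would pass to a shortest directed cycle $C$ in $D'[U]$. Any chord of a shortest directed cycle would produce a strictly shorter one, so $C$ is chordless, i.e.\ induced in $D'$. It therefore suffices to show that $C$ has length $\ge 3$, equivalently that $D'[U]$ contains no digon (length-two directed cycle), equivalently that $S(D')[U]$ has no edge. Condition~1 already gives that $S(D')[U]$ is $P_4$-free, since $S(D)[U]$ is edgeless, but this alone does not forbid edges.

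The heart of the argument, and the step I expect to be the main obstacle, is excluding digons. The idea is to assume a digon $v_i\leftrightarrow v_j$ exists and to locate a third vertex $w$ — a cycle-neighbour of $v_i$ or of $v_j$ — for which $\{w,v_i,v_j\}$ induces a $\vec P_3^{+}$ or $\vec P_3^{-}$ with the appropriate midpoint in $D'$; by condition~4 this triple would then be a $\vec P_3^{\pm}$ in $D$ as well, which is impossible, because in $D$ the vertex $w$ is non-adjacent to the far endpoint of the digon (their cyclic distances exceed one), so the triple is a single arc plus an isolated vertex. The subtlety is that $D'[U]$ may contain single, non-symmetric chords, which are harmless for the shortest-cycle conclusion but can spoil the cleanliness of $\{w,v_i,v_j\}$ by contributing an extra arc. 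To control this I would pick the digon with minimum cyclic distance between its endpoints and track how a chord incident to the chosen neighbour behaves: applying condition~3 to the relevant consecutive triple, a single chord $v_i\to v_j$ for which the corresponding endpoint-neighbour is non-adjacent already yields a $\vec P_3$ in $D'$ absent in $D$, so each surviving chord forces a further chord. One then argues that this propagation cannot be sustained without producing either a closer digon (contradicting minimality) or such a forbidden $\vec P_3$. Once digons are excluded, the shortest directed cycle $C$ has length $\ge 3$ and is the desired induced directed cycle in $D'$.
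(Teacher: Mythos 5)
Your reduction is sound as far as it goes, and it is packaged a little differently from the paper's argument: condition~3 applied to consecutive triples does force a consistently oriented Hamiltonian directed cycle on $U$ in $D'$ with no arcs between vertices at cyclic distance two, and passing to a shortest directed cycle of $D'[U]$ correctly reduces the whole problem to showing that $D'[U]$ contains no digon (a digon is itself a directed cycle of length two, so once digons are excluded, any chord of a shortest cycle would close a strictly shorter directed cycle). The paper instead proves directly that the Hamiltonian cycle itself is induced in $D'$, treating asymmetric chords and digon chords in one stroke: among all chords it picks an arc $(v_i,v_j)$ minimizing the length of the directed path from $v_j$ back to $v_i$ along the cycle; this minimality rules out the arc $(v_i,v_{j+1})$, and then the single triple $\{v_i,v_j,v_{j+1}\}$ must induce either a $\vec{C_3}$ (impossible by condition~2), a $\vec{P_3}$ with midpoint $v_j$ (impossible by condition~3), or, when the chord lies in a digon, a $\vec{P}_3^+$ or $\vec{P}_3^-$ (impossible by condition~4), since in $D$ that triple spans only the single arc $(v_j,v_{j+1})$.

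The genuine gap in your proposal is exactly where you predicted it: the exclusion of digons is never actually proved. Your final paragraph is a plan, not an argument --- ``each surviving chord forces a further chord'' and ``one then argues that this propagation cannot be sustained'' are the entire content of the step, and neither is established. The difficulty is real: a digon $v_i\leftrightarrow v_j$ together with the arcs $(v_j,v_{j+1})$ and $(v_{j+1},v_i)$ spans four arcs on the triple $\{v_i,v_j,v_{j+1}\}$, which is none of the five configurations, so no single application of conditions 2--4 can kill it; worse, the adjacencies forced by condition~4 may point in either direction, and may themselves be digons whose endpoints lie at the same minimal cyclic distance, so the propagation branches and requires an actual termination argument (forced arcs eventually landing on a distance-two pair, or closing an induced $\vec{C_3}$). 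Your instinct that stray chords can spoil the clean triple is legitimate --- indeed, even the paper's antiparallel case tacitly assumes there is no arc $(v_{j+1},v_i)$, which its minimality criterion does not exclude --- but your proposal supplies neither that missing piece nor any completed digon analysis, so the nontrivial direction of the statement remains unproven.
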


\begin{proof}
  By symmetry it suffices to prove that, if $\{v_0,\ldots v_{k-1}\}$
  induces a directed cycle $\vec{C_k}$ in $D$, then the same holds for $D'$.
  The assertion is clear if $k=3$, thus assume $k \ge 4$. We may,
  furthermore, assume that the vertices are traversed in consecutive
  order in $D$. Since $D$ and $D'$ are $P^4C$-isomorphic, each set
  $\{v_i,v_{i+1}, v_{i+2}\}$ induces a $\vec{P_3}$ with midpoint
  $v_{i+1}$ in $D'$, where indices are taken modulo $k$. This yields a
  directed cycle $C$ on $v_0,\ldots v_{k-1}$, possibly with opposite
  orientation wrt.~$D$. In that case we relabel the vertices such
  that the label coincides with the direction of traversal. We claim
  the cycle is induced in $D'$, too.

  Assume it is not, i.e.\ $C$ has a chord $(v_i,v_j)$, $j\ne i-1$
  in~$D'$. We choose $j$ such that the directed path from $v_j$ to
  $v_i$ on $C$ is shortest possible. If $(v_i,v_j)$ is an asymmetric
  arc, then, since $\{v_i,v_j,v_{j+1}\}$ does not induce a
  $\vec{C_3}$, it must induce a $\vec{P_3}$ with midpoint $v_j$
  in~$D'$ and hence the same must hold in $D$, contradicting
  $\vec{C_k}$ being induced. If we have a pair of antiparallel edges
  between $v_i$ and $v_j$, then, similarly, $\{v_i,v_j,v_{j+1}\}$
  induces a $\vec{P}_3^+$ or a $\vec{P}_3^-$ with midpoint $v_j$, also
  leading to a contradiction.
\end{proof}

\begin{thm} If $D$ and $D'$ are $P^4C$-isomorphic then
\[D \text{ is perfect } \Longleftrightarrow D' \text{ is perfect}.\]   
\end{thm}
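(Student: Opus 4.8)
The plan is to reduce the statement to Theorem~\ref{main}, which characterizes perfect digraphs by two conditions: the symmetric part is a perfect graph, and there is no induced directed cycle $\vec{C_n}$ with $n \ge 3$. By symmetry it suffices to show that if $D$ is perfect then $D'$ is perfect. Assuming $D$ is perfect, Theorem~\ref{main} tells me that $S(D)$ is perfect and $D$ has no induced directed cycle of length at least $3$; I want to deduce the same two facts for $D'$.

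The second condition is already handled by Proposition~\ref{prop:cycle}: since $D$ contains no induced directed cycle of length $k \ge 3$ and $D,D'$ are $P^4C$-isomorphic, $D'$ likewise contains no such cycle. So the real work is the first condition, showing $S(D')$ is perfect given that $S(D)$ is perfect. Here I would invoke the Semi-strong Perfect Graph Theorem (Theorem~\ref{reedresult}): it suffices to prove that $S(D)$ and $S(D')$ are $P_4$-isomorphic as undirected graphs. But this is exactly what condition~1 of the definition of $P^4C$-isomorphism asserts --- a four-element set induces a $P_4$ in $S(D)$ if and only if it induces a $P_4$ in $S(D')$. Hence $S(D)$ and $S(D')$ are $P_4$-isomorphic, and by Reed's theorem $S(D')$ is perfect.

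Combining these two deductions, $D'$ has a perfect symmetric part and no induced directed cycle of length $\ge 3$, so Theorem~\ref{main} yields that $D'$ is perfect, completing one direction; the converse follows by exchanging the roles of $D$ and $D'$, which is legitimate because $P^4C$-isomorphism is a symmetric relation.

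I expect the proof to be short, essentially a matter of lining up the right black boxes, since the definition of $P^4C$-isomorphism was clearly engineered to make conditions~2--4 drive Proposition~\ref{prop:cycle} and condition~1 feed directly into Reed's theorem. The only subtlety worth a sentence of care is confirming that condition~1 really gives the undirected $P_4$-isomorphism of $S(D)$ and $S(D')$ in the precise sense Reed requires --- in particular that a $P_4$ in the symmetric part corresponds to what the paper means by an induced $P_4$ in the associated graph --- but given the identification of symmetric digraphs with graphs set up in the introduction, this is immediate rather than a genuine obstacle.
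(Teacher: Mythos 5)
Your proposal is correct and matches the paper's proof essentially verbatim: both arguments combine condition~1 of $P^4C$-isomorphism with Reed's Theorem~\ref{semistrong} to handle the symmetric parts, invoke Proposition~\ref{prop:cycle} for the induced directed cycles, and conclude via the characterization in Theorem~\ref{main}. The only cosmetic difference is that the paper argues both directions at once via the ``if and only if'' statements rather than fixing one direction and appealing to symmetry, but this is the same proof.
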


\begin{proof}
  By assumption $S(D)$ and $S(D')$ are $P_4$-isomorphic, hence using
  Theorem~\ref{semistrong} we find that $S(D)$ is perfect if and only
  if $S(D')$ is perfect. By Proposition~\ref{prop:cycle}, $D$ contains
  an induced directed cycle of length at least three if and only if
  the same holds for $D'$. The assertion thus follows from Theorem~\ref{main}.
\end{proof}

\section{Transitive extensions of cographs}
In this section we will analyse the class of digraphs without any of
the five subgraphs, which thus are trivially pairwise
$P^4C$-isomorphic.

Since the symmetric part of such a graph is a cograph, we may consider
its cotree~\cite{corneil} in canonical form, where the labels
alternate between $0$ and $1$. Since the $1$-labeled tree vertices
correspond to complete joins, there is no additional room for
asymmetric arcs. The $0$-labeled vertices correspond to disjoint
unions. Assume the connected components in $S(G)$ are $G_1,\ldots G_k$.
\begin{prop}
  If there exists an asymmetric arc connecting a vertex $v_i$ in $G_i$ to a
  vertex $v_j$ in~$G_j$, then $G_i$ and $G_j$ are connected by an orientation
  of the complete bipartite graph $K_{V(G_i),V(G_j)}$.
\end{prop}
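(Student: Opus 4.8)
The plan is to exploit the crucial structural fact that $G_i$ and $G_j$, being distinct connected components of $S(G)$, are joined by no symmetric arcs at all. Consequently, for any $u\in V(G_i)$ and $w\in V(G_j)$ the pair $\{u,w\}$ is either a non-edge or is joined by a single asymmetric arc. Thus proving the claim amounts to showing that, once a single asymmetric cross-arc exists, no cross-pair can be a non-edge; the orientation of $K_{V(G_i),V(G_j)}$ is then automatic, since every cross-pair carries exactly one arc. The only tool needed is the exclusion of the induced configurations $\vec{P}_3^+$ and $\vec{P}_3^-$.

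The \emph{local step} is this: suppose $x$ and $y$ lie in different components of $S(G)$ and are joined by an asymmetric arc, and let $z$ be a symmetric neighbour of $y$ inside its own component. Since $x$ and $z$ also lie in different components of $S(G)$, the pair $\{x,z\}$ is either a non-edge or an asymmetric arc. If it were a non-edge, then $\{x,y,z\}$ would carry exactly one asymmetric arc (between $x$ and $y$) and one symmetric arc (between $y$ and $z$) sharing the vertex $y$, with $x$ and $z$ non-adjacent; this is precisely an induced $\vec{P}_3^+$ or $\vec{P}_3^-$ with midpoint $y$, which is forbidden. Hence $x$ and $z$ are joined by an asymmetric arc.

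With this step in hand, I would first fix the given vertex $v_i$ and propagate along the connected graph $G_j$: starting from the asymmetric arc between $v_i$ and $v_j$ and walking along the symmetric edges of a spanning tree of $G_j$, the local step shows inductively that $v_i$ is joined by an asymmetric arc to every vertex of $G_j$. Then I would fix an arbitrary $w\in V(G_j)$, which is now adjacent to $v_i\in V(G_i)$, and apply the same propagation inside the connected graph $G_i$ (with $w$ playing the role of $x$ and $v_i$ the role of the midpoint) to conclude that $w$ is joined by an asymmetric arc to every vertex of $G_i$. Since $w$ was arbitrary, every cross-pair is joined by exactly one asymmetric arc, which is the desired orientation of $K_{V(G_i),V(G_j)}$.

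I expect the argument to be essentially routine once the local step is isolated; the only point requiring care is the bookkeeping of midpoints --- verifying that in each application the symmetric and asymmetric arcs share exactly the vertex we intend as the midpoint, so that a missing cross-arc really does produce an induced $\vec{P}_3^+$ or $\vec{P}_3^-$ rather than some other (possibly permitted) triple. It is also worth recording explicitly that one never needs to control the direction of the asymmetric arcs: the existence of a single arc per cross-pair, together with the absence of symmetric cross-arcs guaranteed by the component structure, already yields an orientation of the complete bipartite graph.
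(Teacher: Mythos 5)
Your proposal is correct and takes essentially the same approach as the paper: the identical local step (a missing cross-pair, combined with the asymmetric arc and a symmetric edge meeting at the intended midpoint, would induce a forbidden $\vec{P}_3^+$ or $\vec{P}_3^-$), then propagation using connectivity of the components. The paper simply compresses your explicit two-phase spanning-tree induction into the remark that ``since $S(G_i)$ and $S(G_j)$ are connected and by symmetry, it suffices to show that $v_i$ is connected by an asymmetric arc to all symmetric neighbors of $v_j$.''
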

\begin{proof}
  Since $S(G_i)$ and $S(G_j)$ are connected and by symmetry, it
  suffices to show that $v_i$ must be connected by an asymmetric arc to all
  symmetric neighbors of~$v_j$. Let $w$ be such a neighbor. Since
  there is no symmetric arc from $v_i$ to $w$ and $\{v_i,v_j,w\}$ must
  neither induce a $\vec{P}_3^-$ nor a $\vec{P}_3^+$, we must have an
  asymmetric arc between $v_i$ and $w$.
\end{proof}

Hence, the asymmetric arcs between the components $G_1,\ldots,G_k$
constitute an orientation of a complete $\ell$-partite graph for $1\le
\ell \le k$. The situation is further complicated by the fact that we
must neither create a $\vec{C_3}$ nor a $\vec{P_3}$, where we have to
take into account that there may also be asymmetric arcs within the $G_i$.

We wonder whether this structure is strict enough to make some
problems tractable that are $\mathcal{NP}$-complete in general. In
particular we would be interested in the complexity of the problem to cover all vertices with a minimum number of vertex disjoint directed paths.

\end{document}